\newtheorem{cor}{Corollary}
\newtheorem{prop}{Proposition}
\newtheorem*{defin}{Definition}
\newcounter{noteno}\setcounter{noteno}{0}
\newcounter{exam}\setcounter{exam}{0}
\newenvironment{Note}%
	{\refstepcounter{noteno}%
	\begin{small}
	\medbreak\par\noindent{{\bf Note~\thenoteno}.}}%
	{\hfill{$\Box$}\end{small}\par\medbreak}
\par\noindent{{\bf Example~\theexam}.}}%
\newcommand{\m}{\mathbf}
\newcommand{\bl}{\bullet}
\newcommand{\p}{\partial}
\def\d{\,{\rm{d}}}
\title[Projective superflows. III]
{Projective superflows. III.\\
Finite subgroups of $U(2)$}
\author[G. Alkauskas]{Giedrius Alkauskas}
\address{Vilnius University, Department of Mathematics and Informatics, Naugarduko 24, LT-03225 Vilnius, Lithuania}
\email{giedrius.alkauskas@mif.vu.lt}
\begin{document}
\begin{abstract} Let $\m{x}\in\mathbb{R}^{n}$ or $\mathbb{C}^{n}$. For $\phi:\mathbb{R}^{n}\mapsto\mathbb{R}^{n}$ (respectively, $\phi:\mathbb{C}^{n}\mapsto\mathbb{C}^{n}$) and $t\in\mathbb{R}$ (respectively, $\mathbb{C}$), we put $\phi^{t}=t^{-1}\phi(\m{x}t)$. \emph{A projective  flow} is a solution to the projective translation equation $\phi^{t+s}=\phi^{t}\circ\phi^{s}$, $t,s\in\mathbb{R}$ or $\mathbb{C}$.\\ \indent \emph{The projective superflow} is a projective flow with a rational vector field which, among projective flows with a given symmetry, is, up to a homothety, unique and optimal.\\
\indent In the first and the second part of this work, we classified real $2$ and $3-$dimensional supeflows over $\mathbb{R}$. \\
\indent In this third part we classify all $2-$dimensional complex superflows; that is, whose group of symmetries are finite subgroups of $U(2)$. This includes both irreducible and reducible superflows. 
\end{abstract}
\pagestyle{fancy}
\fancyhead{}
\fancyhead[LE]{{\sc Projective superflows. III.}}
\fancyhead[RO]{{\sc G. Alkauskas}}
\fancyhead[CE,CO]{\thepage}
\fancyfoot{}

\date{August 8, 2016}
\subjclass[2010]{Primary 39B12, 14H70, 14LXX,  	37C10}
\keywords{Translation equation, projective flow, rational vector fields, linear groups, invariant theory, group representations, unitary group}
\thanks{The research of the author was supported by the Research Council of Lithuania grant No. MIP-072/2015}

\maketitle
\section{Projective flows}
\subsection{Preliminaries}
\label{prelim}
Let $\m{x}\in\mathbb{C}^{n}$, $\phi:\mathbb{C}^{n}\mapsto\mathbb{C}^{n}$ (or, respectively, everywhere $\mathbb{R}^{n}$). The introduction to the problem and general setting is contained in the first part of this study. We only recall that \emph{the projective translation equation} was first introduced in \cite{alkauskas-t} and is the equation of the form

\begin{eqnarray}\setlength{\shadowsize}{2pt}\shadowbox{$\displaystyle{\quad
\frac{1}{t+s}\,\phi\big{(}\m{x}(t+s)\big{)}=\frac{1}{s}\,\phi\Big{(}\phi(\m{x}t)\frac{s}{t}\Big{)}},\quad t,s\in\mathbb{R}\text{ or }\mathbb{C}$.\quad}\label{funk}
\end{eqnarray}

A non-singular solution of this equation is called \emph{a projective flow}.  The \emph{non-singularity} means that a flow satisfies the boundary condition
\begin{eqnarray}
\lim\limits_{t\rightarrow 0}\frac{\phi(\m{x}t)}{t}=\m{x}.
\label{init}
\end{eqnarray}

A smooth $2-$dimensional projective flow $\phi=u\bl v$ is accompanied by its \emph{vector field}, which is found from
\begin{eqnarray}
\varpi(x,y)\bl\varrho(x,y)=\frac{\d}{\d t}\frac{\phi(xt,yt)}{t}\Big{|}_{t=0}.
\label{vec}
\end{eqnarray}
We use this notation instead of the standard $\varpi\frac{\p}{\p x}+\varrho\frac{\p}{\p y}$, since in our setting we consider this as a vector, not as a derivation. Vector field is necessarily a pair of $2$-homogenic functions. \\

Apart from the first paper \cite{alkauskas-t} where topologic side of the projective translation equation is considered, in \cite{alkauskas, alkauskas-un, alkauskas-ab} we only investigate vector fields which are rational functions, necessarily $2-$homogenic. So, this field of research is in the intersection of differential geometry, algebraic geometry, theory of algebraic and abelian functions, special functions of mathematical physics.\\

If a function is smooth, the functional equation (\ref{funk}) implies the PDE \cite{alkauskas}
\begin{eqnarray}
u_{x}(\varpi-x)+u_{y}(\varrho-y)=-u,\label{pde}
\end{eqnarray}
and the same PDE for $v$, with the boundary conditions as given by (\ref{init}). Namely,
\begin{eqnarray*}
\lim\limits_{t\rightarrow 0}\frac{u(xt,yt)}{t}=x,\quad 
\lim\limits_{t\rightarrow 0}\frac{v(xt,yt)}{t}=y.
\end{eqnarray*}
The orbits of the flow with the vector field $\varpi\bl\varrho$ are given by $\mathscr{W}(x,y)=\mathrm{const}.$, where the function $\mathscr{W}$ can be found from the differential equation
\begin{eqnarray}
\mathscr{W}(x,y)\varrho(x,y)+\mathscr{W}_{x}(x,y)[y\varpi(x,y)-x\varrho(x,y)]=0.
\label{orbits}
\end{eqnarray}
The function $\mathscr{W}$ is uniquely determined (up to a scalar multiple) from this ODE and the condition that it is a $1-$homogenic function. If there exists $N\in\mathbb{N}$ such that $\mathscr{W}^{N}(x,y)$ is a rational function, necessarily $N-$homogenic in $x,y$, such minimal $N$ is call \emph{a level of the flow} \cite{alkauskas}. The orbits of the flow are then algebraic curves $\mathscr{W}^{N}(x,y)=\mathrm{const}.$, and the flow itself can be integrated in terms of abelian functions. Generically, this holds for $2-$dimensional flows with rational vector fields with rational coefficients \cite{alkauskas-ab}. However, the orbits of the flow with the vector field $x^2+xy+y^2\bl xy+y^2$ are non-algebraic curves $\exp\big{(}-\frac{x}{y}-\frac{x^2}{2y^2}\big{)}y=\mathrm{const.}$  \cite{alkauskas-ab}.

\subsection{Superflows}
We briefly recall the notions of the superflows introduced in \cite{alkauskas-un,alkauskas-super1} and continued in \cite{alkauskas-super2}.
\begin{defin}
Let $n\in\mathbb{N}$, $n\geq 2$, and $\Gamma\hookrightarrow{\rm GL}(n,\mathbb{R})$ or ${\rm GL}(n,\mathbb{C})$, be an exact representation of a finite group, and we identify $\Gamma$ with the image. We call the flow $\phi(\m{x})$ \emph{the $\Gamma$-superflow}, if 
\begin{itemize}
\item[i)]there exists a vector field $\mathbf{Q}(\m{x})=Q_{1}\bl\cdots\bl Q_{n}\neq 0\bl\cdots\bl 0$ whose components are $2$-homogenic rational functions and which is exactly the vector field of the flow $\phi(\m{x})$, such that
\begin{eqnarray}
\gamma^{-1}\circ\mathbf{Q}\circ\gamma(\mathbf{x})=
\mathbf{Q}(\mathbf{x})\label{kappa}
\end{eqnarray}
 is satisfied for all $\gamma\in\Gamma$,
\item[ii)] every other vector field $\mathbf{Q}'$ which satisfies (\ref{kappa}) for all $\gamma\in\Gamma$ is either a scalar multiple of $\mathbf{Q}$, or  its degree of a common denominator is higher than that of $\mathbf{Q}$. 
\end{itemize}

The superflow is said to be \emph{reducible or irreducible}, if the representation  $\Gamma\hookrightarrow{\rm GL}(n,\mathbb{C})$ ($\Gamma\hookrightarrow{\rm GL}(n,\mathbb{R})$ is also considered as a complex representation) is reducible or, respectively, irreducible.
\end{defin} 
Thus, if $\phi$ is a superflow,  then it is uniquely defined up to conjugation with a linear map $\m{x}\mapsto t\m{x}$. This corresponds to multiplying all components of $\mathbf{Q}$ by $t$.\\

In a $2-$dimensional real case, we found that for every $d\in\mathbb{N}$, there exist the superflow whose group of symmetries is the dihedral groups $\mathbb{D}_{4d+2}$. This list exhaust all $2-$dimensional superflows \cite{alkauskas-super1}. \\

In a $3-$dimensional real case, we found that there exists three superflows \cite{alkauskas-super1,alkauskas-super2}:
\begin{itemize}
\item[$\widehat{\mathbb{T}}$)] The superflow whose group of symmetries is of order $24$, the group of full symmetries of a tetrahedron, generic orbits are space curves of arithmetic genus $1$, and the superflow itself can be described in terms of Jacobi elliptic functions;
\item[$\mathbb{O}$)]The superflow whose group of symmetries is of order $24$, the group of orientation preserving symmetries of an octahedron,  generic orbits are space curves of arithmetic genus $9$, and the superflow itself can be described in terms of Weierstrass elliptic functions;
\item[$\mathbb{I}$)]The superflow whose group of symmetries is of order $60$, the group of orientation preserving symmetries of an icosahedron,  generic orbits are space curves of arithmetic genus $25$.
%, and the superflow itself can be described in terms of...
\end{itemize}
All these superflows are irreducible.
\section{Reducible superflows. I. A special case}
\subsection{$2-$dimensional reducible representations}If an exact representation $\Gamma\hookrightarrow{\rm GL}(2,\mathbb{C})$ is reducible, then the group $\Gamma$ is cyclic, and it is conjugate to the group generated by
\begin{eqnarray*}
\alpha=\begin{pmatrix}
\zeta & 0\\
0 &\xi
\end{pmatrix},\quad\zeta^{m}=\xi^{n}=1\quad m,n\in\mathbb{N},
\quad |\Gamma|=\frac{mn}{\mathrm{g.c.d.}(m,n)}.
\end{eqnarray*}
Here we assume that $\zeta$ and $\xi$ are primitive $m$th and $n$th roots of unity, respectively.\\   

As an introduction to this setting, in this section we will investigate a special case which produces the first non-trivial examples of reducible superflows. As noted in \cite{alkauskas-super1}, the case $\xi=\zeta^{-1}$ never gives a superflow. Indeed, such an invariant vector field is necessarily of the form $\varpi(x,y)\bl\varpi(y,x)$, and thus the vector field $\varpi(x,y)\bl c\varpi(y,x)$, $c\neq 0$, is also invariant under conjugation with $\alpha$, and this contradicts the uniqueness property of the superflow. \\
    
Let $m\in\mathbb{N}$, $\zeta=e^{\frac{2\pi i}{m}}$ be a primitive $m$th root of unity, and let
\begin{eqnarray}
\alpha=\begin{pmatrix}
\zeta & 0\\
0 & -\zeta^{-1}
\end{pmatrix}.\label{alfa}
\end{eqnarray}
We have: $n=m$, $n=\frac{m}{2}$ or $n=2m$, depending on whether $m\equiv0\text{ (mod 4})$, $m\equiv2\text{ (mod 4})$, or $m$ is odd. The trace of the matrix $\alpha$ is equal to $2i\sin\frac{2\pi}{m}$, for $m\geq 3$ this is not a real number, and so for $m\geq 3$ the matrix $\alpha$ is not conjugate to any matrix in $\mathrm{GL}(2,\mathbb{R})$.
  
\subsection{The first special case}
\label{spec-first}
First we will investigate the case $m=4k+3$, $k\in\mathbb{N}\cup\{0\}$.
\begin{prop}The only vector field with a denominator of degree $\leq 2k$ which is  invariant under conjugation with $\alpha$ is given by, up to multiplication with a constant, by
\begin{eqnarray*}
\varpi\bl\varrho=\frac{y^{2k+2}}{x^{2k}}\bl 0.
\end{eqnarray*}
So this vector field produces the superflow. 
\end{prop}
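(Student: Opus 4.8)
The plan is to reduce the invariance condition (\ref{kappa}), taken for the single generator $\gamma=\alpha$, to an arithmetic condition on the exponents of individual monomials, and then to count which monomials are cheap enough to meet the denominator bound. Writing out $\alpha^{-1}\circ\m{Q}\circ\alpha=\m{Q}$ with $\alpha$ as in (\ref{alfa}) gives the pair of scalar identities
\[
\zeta^{-1}\varpi(\zeta x,-\zeta^{-1}y)=\varpi(x,y),\qquad
-\zeta\,\varrho(\zeta x,-\zeta^{-1}y)=\varrho(x,y).
\]
Since $\varpi$ and $\varrho$ are $2$-homogenic rational functions, each is a finite $\mathbb{C}$-linear combination of Laurent monomials $x^{a}y^{b}$ with $a+b=2$ and $a,b\in\mathbb{Z}$, and the diagonal substitution scales each such monomial by a root of unity. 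Hence the whole problem decouples monomial by monomial, and it suffices to decide which exponent pairs $(a,b)$ survive.

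First I would record the character by which each monomial is scaled. For $x^{a}y^{b}$ occurring in $\varpi$ the factor is $(-1)^{b}\zeta^{a-b-1}$, and for $x^{a}y^{b}$ occurring in $\varrho$ it is $(-1)^{b+1}\zeta^{a-b+1}$; invariance forces each to equal $1$. The feature to exploit is that $m=4k+3$ is odd, so $-1$ is not a power of $\zeta$, and the product $(-1)^{\epsilon}\zeta^{\ell}$ lives in the group $\mu_{2m}$ of $2m$th roots of unity. Thus each invariance condition becomes a congruence modulo $2m=8k+6$. Eliminating $a=2-b$ turns the $\varpi$-condition into $b(4k-1)\equiv-2\pmod{8k+6}$ and the $\varrho$-condition into $b(4k-1)\equiv 4k-3\pmod{8k+6}$. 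Because $\gcd(4k-1,8k+6)=1$, each congruence determines a single residue class; a direct check via $(2k+2)(4k-1)\equiv-2\pmod{8k+6}$ yields $b\equiv 2k+2$ for $\varpi$ and $b\equiv 2k+3$ for $\varrho$.

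It then remains to read off denominator degrees. Because $a+b=2$, at most one exponent of a surviving monomial is negative, so its contribution to the common denominator has degree $\max(-a,-b,0)$. For $\varpi$ the admissible monomials are $x^{-2k-j(8k+6)}y^{2k+2+j(8k+6)}$, $j\in\mathbb{Z}$, whose denominator degree is minimised uniquely at $j=0$, giving $y^{2k+2}/x^{2k}$ of denominator degree exactly $2k$; every other $j$ costs denominator degree at least $6k+4$. For $\varrho$ the admissible monomials are $x^{-2k-1-j(8k+6)}y^{2k+3+j(8k+6)}$, and the cheapest, at $j=0$, already has denominator degree $2k+1>2k$. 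Consequently any invariant field with common denominator of degree $\le 2k$ can contain no $\varrho$-monomial and only the single $\varpi$-monomial $y^{2k+2}/x^{2k}$, which is exactly the stated form. Since $2k$ is thereby the minimal denominator degree attained by any nonzero invariant field, and that field is unique up to a scalar, condition (ii) of the definition is satisfied and the field is the superflow.

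The main obstacle I anticipate is purely the bookkeeping around the sign: one must absorb the factor $(-1)^{b}$ into the modulus $2m$ rather than $m$, or the congruences come out wrong. After that, the one genuinely substantive point is the minimisation of denominator degree over the full family indexed by $j$ — in particular confirming that the $j=0$ term for $\varpi$ strictly beats both its $j=\pm1$ neighbours \emph{and} every $\varrho$-monomial, so that the bound $2k$ is sharp and the competing component $\varrho$ is forced to vanish.
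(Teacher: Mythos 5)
Your argument has a genuine gap at its very first reduction: the claim that, being $2$-homogenic rational functions, $\varpi$ and $\varrho$ are ``finite $\mathbb{C}$-linear combinations of Laurent monomials $x^{a}y^{b}$'' is false as stated. A $2$-homogenic rational function is a ratio of homogeneous polynomials, but its denominator need not be a monomial --- e.g.\ $\frac{x^{3}}{x+y}$ or $\frac{x^{2k+2}}{(x^{2}+cy^{2})^{k}}$ are $2$-homogenic with denominator degree $\leq 2k$ and admit no Laurent-monomial decomposition. Since the proposition asserts uniqueness among \emph{all} invariant fields with denominator degree $\leq 2k$, your proof as written silently excludes every candidate with a non-monomial denominator, and nothing in your congruence analysis rules these out. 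This is precisely the step the paper takes care of before any computation: the denominator (in lowest terms) of an invariant field must be a \emph{relative invariant} of $\Gamma$, because conjugation by the diagonal matrix $\alpha$ fixes the field and hence can only rescale its reduced denominator; and for this cyclic group two monomials $x^{a}y^{b}$, $x^{a'}y^{b'}$ of equal degree carry the same character only when $b\equiv b'\pmod{8k+6}$, so every relative invariant of degree $\leq 2k$ (indeed of degree $<8k+6$) is a monomial $x^{i}y^{j}$ --- the paper records this by noting the smallest non-monomial relative invariant is $x^{8k+6}+cy^{8k+6}$. Only after this reduction is the monomial-by-monomial decoupling legitimate.

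Once that lemma is inserted, the rest of your computation is correct and is essentially the paper's argument in a different guise: your character conditions $(-1)^{b}\zeta^{a-b-1}=1$ and $(-1)^{b+1}\zeta^{a-b+1}=1$, rewritten modulo $2m=8k+6$ as $b(4k-1)\equiv-2$ and $b(4k-1)\equiv 4k-3$, are exactly the non-vanishing conditions for the paper's averaged geometric sums (\ref{sum-pirm}) and (\ref{sum-antr}) --- the paper averages over $\Gamma$ and asks when the sum survives, you diagonalize the action and ask when the character is trivial; these are the same congruences, with your solutions $b\equiv 2k+2$ (for $\varpi$) and $b\equiv 2k+3$ (for $\varrho$) matching the paper's unique pair $(i,\ell)=(0,2k)$ and its empty set of pairs, respectively. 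Your final minimization over $j$ (denominator degree $2k$ at $j=0$ versus $\geq 6k+4$ otherwise for $\varpi$, and $\geq 2k+1$ always for $\varrho$) correctly yields uniqueness and condition (ii). So the fix is local --- add the relative-invariance reduction --- but without it the proof does not establish the stated uniqueness.
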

\begin{proof}The ring of invariants for the group $\Gamma=\{\alpha^{s}:s=0,\ldots,8k+5\}$ is generated by $x^{4k+3}$, $y^{8k+6}$, $(xy)^2$. We know that the denominator of the superflow is a relative invariant of the group. The group $\Gamma$ has the following relative invariants of degree up to $2k$: $x^{i}y^{j}$, $i,j\geq 0$, $i+j\leq 2k$. Indeed, the smallest degree relative invariant (in fact, an invariant) which is not a monomial, is equal to $x^{8k+6}+cy^{8k+6}$. Without loss of generality, we can investigate only cases when a denominator is equal to $x^{\ell}y^{2k-\ell}$, $\ell=0,1,\ldots,2k$. \\

Let therefore
\begin{eqnarray*}
\widehat{\varpi}\bl\widehat{\varrho}=\frac{P(x,y)}{x^{\ell}y^{2k-\ell}}\bl\frac{Q(x,y)}{x^{\ell}y^{2k-\ell}}
\end{eqnarray*} 
be a generic $2-$homogenic vector field with a denominator $x^{\ell}y^{2k-\ell}$, where
\begin{eqnarray}
P(x,y)&=&\sum\limits_{i=0}^{2k+2}u_{i}x^{i}y^{2k+2-i},\quad u_{i}\in\mathbb{C},\label{pol-p}\\
Q(x,y)&=&\sum\limits_{i=0}^{2k+2}v_{i}x^{i}y^{2k+2-i},\quad v_{i}\in\mathbb{C}\label{pol-q}.  
\end{eqnarray}  
We know that if there exist a superflow with a denominator $x^{\ell}y^{2k-\ell}$, its vector field is given by \cite{alkauskas-super1}
\begin{eqnarray*}
\varpi\bl\varrho=\frac{1}{|\Gamma|}\sum\limits_{\sigma\in\Gamma}\sigma^{-1}
\circ(\widehat{\varpi}\bl\widehat{\varrho})\circ{\sigma}(x,y).
\end{eqnarray*}
So, the vector field of a superflow, up to a scalar multiple $\frac{1}{|\Gamma|}=\frac{1}{8k+6}$ (which can be discarded) is given by
\begin{eqnarray}
\sum\limits_{s=0}^{8k+5}\frac{\zeta^{-s}P\big{(}\zeta^{s}x,(-1)^s\zeta^{-s}y\big{)}}{\big{(}\zeta^{s}x\big{)}^{\ell}\Big{(}(-1)^{s}\zeta^{-s}y\Big{)}^{2k-\ell}}\bl \sum\limits_{s=0}^{8k+5}\frac{(-1)^{s}\zeta^{s}Q\big{(}\zeta^{s}x,(-1)^s\zeta^{-s}y\big{)}}{\big{(}\zeta^{s}x\big{)}^{\ell}\Big{(}(-1)^{s}\zeta^{-s}y\Big{)}^{2k-\ell}}.
\label{sum}
\end{eqnarray}
Consider now the term $x^{i}y^{2k+2-i}$ appearing in the polynomial (\ref{pol-p}). The part of the sum (the coefficient at $x^{i-\ell}y^{2+\ell-i}$) which corresponds to it in (\ref{sum}) is equal to (recall that $\zeta^{4k+3}=1$)
\begin{eqnarray}
\sum\limits_{s=0}^{8k+5}\zeta^{2is-2\ell s-3s}(-1)^{s(2+\ell-i)}
=
\frac{\Big{(}(-1)^{i+\ell}\zeta^{2i-2\ell-3}\Big{)}^{8k+6}-1}{(-1)^{i+\ell}\zeta^{2i-2\ell-3}-1}=0,
\label{sum-pirm}
\end{eqnarray}
unless $(-1)^{i+\ell}\zeta^{2i-2\ell-3}=1$; then it is non-zero. The latter happens when $i+\ell$ is even ($\zeta^{t}$ is never equal to $-1$), and $2i-2\ell-3\equiv 0\text{ (mod }4k+3)$. Since $0\leq i\leq 2k+2$, $0\leq\ell\leq 2k$, the only pair to satisfy these is $(i,\ell)=(0,2k)$. \\

Equally, consider now the term $x^{i}y^{2k+2-i}$ appearing in the polynomial (\ref{pol-q}). The part of the sum which corresponds to it in (\ref{sum}) is equal to
\begin{eqnarray}
\sum\limits_{s=0}^{8k+5}(-1)^s\zeta^{2is-2\ell s-s}(-1)^{s(2+\ell-i)}
=
\frac{\Big{(}(-1)^{i+\ell+1}\zeta^{2i-2\ell-1}\Big{)}^{8k+6}-1}{(-1)^{i+\ell+1}\zeta^{2i-2\ell-1}-1}=0,
\label{sum-antr}
\end{eqnarray}
unless $(-1)^{i+\ell+1}\zeta^{2i-2\ell-1}=1$; then it is non-zero. No pair $(i,\ell)$ with $0\leq i\leq 2k+2$, $0\leq\ell\leq 2k$ satisfies this. \emph{A posteriori}, the crucial thing here is when $i=2k+2$, $\ell=0$, $\zeta^{2i-2\ell-1}=1$, but $(-1)^{i+\ell+1}=-1$. That is why $\xi=\zeta$ produces no superflow, while $\xi=-\zeta^{-1}$ does.\\

Thus, for any vector field, the sum (\ref{sum}) is non-zero and has one free coefficient only when $(i,\ell)=(0,2k)$, and thus $\frac{y^{2k+2}}{x^{2k}}\bl 0$ is the unique non-zero (up to multiplication by a constant) vector field invariant under conjugation with $\alpha$ whose denominator is of the form $x^{\ell}y^{2k-\ell}$. Thus, $\frac{y^{2k+2}}{x^{2k}}\bl 0$ satisfies all the conditions imposed upon the vector field of the superflow.  
\end{proof}
We can integrate the vector field $\varpi\bl\varrho=\frac{y^{2k+2}}{x^{2k}}\bl 0$ immediately, based on the method developed in \cite{alkauskas}. Let $U(x,y)\bl y$ be this flow. Note that the second coordinate is equal to $0$ due to $\varrho=0$. We know that 
\begin{eqnarray*}
\int\limits_{\frac{y}{U(x,y)}}^{\frac{y}{x}}\frac{\d t}{\varpi(1,t)}=y,
\end{eqnarray*}
where $\varpi(1,t)=t^{2k+2}$. We can multiply the vector field by a constant without changing the superflow property. Thus, for the sake of simplicity, we may assume $\varpi(1,t)=\frac{1}{2k+1}t^{2k+2}$. This gives
\begin{eqnarray*}
\Bigg{(}\frac{U(x,y)}{y}\Bigg{)}^{2k+1}-\Big{(}\frac{x}{y}\Big{)}^{2k+1}=y.
\end{eqnarray*}
Thus, we have proved the first part of the following.
\begin{prop}
\label{prop2}
Let $k\in\mathbb{N}$. Let us define
\begin{eqnarray*}
\phi(\m{x})=U(x,y)\bl V(x,y)=\sqrt[2k+1]{x^{2k+1}+y^{2k+2}}\bl y.
\end{eqnarray*}  
Then $\phi$ has a vector field $\frac{1}{2k+1}\frac{y^{2k+2}}{x^{2k}}\bl 0$, and is a projective superflow for the cyclic group of order $8k+6$ generated by the matrix $\alpha$. The full group of symmetries of $\phi$ is given by
\begin{eqnarray*}
\Gamma_{4k+3}=\Bigg{\{}\gamma_{c}=\begin{pmatrix}
c^{2k+2} & 0\\
0 & c^{2k+1}
\end{pmatrix}:\quad c\in\mathbb{C}^{*}\Bigg{\}}.
\end{eqnarray*}
\end{prop}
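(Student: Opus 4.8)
The first assertion is already contained in the integration preceding the statement, so the plan is to confirm it cleanly and then concentrate on the superflow and symmetry claims. To see that $\phi$ is a genuine projective flow I would compute the one-parameter family $\phi^{t}(\m{x})=t^{-1}\phi(\m{x}t)$ explicitly; one finds
\[
\phi^{t}(x,y)=\sqrt[2k+1]{x^{2k+1}+ty^{2k+2}}\bl y,
\]
after which the additive law $\phi^{t+s}=\phi^{t}\circ\phi^{s}$ is immediate, since composition merely adds the parameters in the radicand while the second coordinate is fixed, and the boundary condition (\ref{init}) follows by letting $t\to 0$. Differentiating $\phi^{t}$ in $t$ at $t=0$ through (\ref{vec}) then returns the vector field $\frac{1}{2k+1}\frac{y^{2k+2}}{x^{2k}}\bl 0$, reproving the first part.

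For the superflow property I would invoke the preceding Proposition directly: it establishes that $\frac{y^{2k+2}}{x^{2k}}\bl 0$ is, up to a scalar, the unique vector field invariant under conjugation with $\alpha$ whose denominator has degree at most $2k$, so every other invariant field has a denominator of strictly larger degree; this is precisely conditions i) and ii) of the definition of a $\Gamma$-superflow for $\Gamma=\langle\alpha\rangle$, a cyclic group of order $8k+6$. The $\alpha$-invariance itself reduces, since $\alpha$ is diagonal, to the scalar identity $(-\zeta^{-1})^{2k+2}=\zeta^{2k+1}$, which holds because $\zeta^{4k+3}=1$.

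The substantive new content is the determination of the full symmetry group, that is, of all $\gamma\in{\rm GL}(2,\mathbb{C})$ with $\gamma^{-1}\circ\m{Q}\circ\gamma=\m{Q}$ for $\m{Q}=\frac{1}{2k+1}\frac{y^{2k+2}}{x^{2k}}\bl 0$ (equivalent to $\gamma^{-1}\circ\phi\circ\gamma=\phi$, as the flow is determined by its field). My plan is first to pin down the shape of $\gamma$ geometrically. Matching zero divisors on the two sides of the invariance relation shows $\gamma$ must preserve the vanishing locus $\{y=0\}$ of $\m{Q}$, and matching pole (denominator) divisors shows $\gamma$ must preserve the polar locus $\{x=0\}$; preserving both coordinate axes forces $\gamma=\mathrm{diag}(\lambda,\mu)$ to be diagonal. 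Substituting a diagonal $\gamma$ then collapses, after cancelling the common factor $\frac{1}{2k+1}\frac{y^{2k+2}}{x^{2k}}$, to the single equation $\mu^{2k+2}=\lambda^{2k+1}$.

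It remains to identify the solution set of $\mu^{2k+2}=\lambda^{2k+1}$ with $\Gamma_{4k+3}$, and this is where the one arithmetic point enters. Since $\gcd(2k+1,2k+2)=1$, setting $c=\lambda/\mu$ and using the relation gives $c^{2k+2}=\lambda^{2k+2}/\mu^{2k+2}=\lambda$ and $c^{2k+1}=\lambda^{2k+1}/\mu^{2k+1}=\mu$, so every such $\gamma$ equals $\gamma_{c}$; conversely each $\gamma_{c}$ visibly solves the equation. Hence the symmetry group is exactly $\Gamma_{4k+3}$, and as a consistency check $\alpha=\gamma_{-\zeta^{2}}$ lies in it. The main obstacle is not any single computation but making the reduction to diagonal matrices airtight: one must argue via the matching of zero and pole divisors, rather than pointwise values, that no off-diagonal $\gamma$ can satisfy the invariance, after which the remaining algebra is forced.
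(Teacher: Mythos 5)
Your proposal is correct and follows essentially the same route as the paper: the first part is inherited from the preceding integration and uniqueness proposition, and the symmetry group is found by reducing an arbitrary linear symmetry of the vector field $\frac{y^{2k+2}}{x^{2k}}\bl 0$ to a diagonal one and solving $\mu^{2k+2}=\lambda^{2k+1}$ via $c=\lambda/\mu$, exactly the content behind the paper's terse ``we directly see that $L$ is a diagonal linear transformation.'' Your divisor-matching justification of diagonality is a clean way to fill in that step, and it correctly hinges on $k\geq 1$ (for $k=0$ the pole divisor $\{x=0\}$ disappears, which is precisely the exceptional sub-case with the larger triangular symmetry group treated next in the paper).
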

This is the second example of reducible superflows. No superflows in \cite{alkauskas-super1, alkauskas-super2} are reducible, apart from the superflow
\begin{eqnarray*}
\frac{x}{x+y+1}\bl\frac{y}{x+y+1},
\end{eqnarray*}
described in (\cite{alkauskas-super1}, Subsection 2.2). The latter is a rational flow of level $0$ (its orbits are curves $\frac{x}{y}=\mathrm{const.}$), while $\phi(\m{x})$ in Proposition \ref{prop2} is a flow of level $1$. Indeed, its orbits are given by $y=\mathrm{const}$.\\

For $\phi(\m{x})$, we can check the flow property (\ref{funk}) immediately.

\begin{Note} We see that irreducible superflows have always a finite group as the group of their symmetries. For reducible flows, as far as our examples suggest, the following phenomenon occurs: the superflow itself is defined in terms of a finite group, but the whole group of symmetries turns out to be infinite, an extension of an initial one. This dichotomy \emph{finite symmetry group - infinite symmetry group} seems to be essential in distinguishing \emph{irreducible - reducible} superflows.   
\end{Note}
\begin{proof} We are only left to find all symmetries. Let $k\geq 1$. Suppose that $L$ is a non-degenerate linear map, and
\begin{eqnarray*}
L^{-1}\circ(\frac{y^{2k+2}}{x^{2k}}\bl 0)\circ L(x,y)=\frac{y^{2k+2}}{x^{2k}}\bl 0.
\end{eqnarray*} 
We directly see that $L$ is a diagonal linear transformation, and it is easy to see that it is of the form given in the formulation of the Proposition. This vector field has no other linear symmetries. We will soon see that this is not true in case $k=0$. 
\end{proof}  
We can formally verify the invariance of $\phi$ under conjugation with $\gamma_{c}$. Indeed,
\begin{eqnarray*}
\gamma_{c}^{-1}\circ(U\bl V)\circ\gamma_{c}(\m{x})&=&
\gamma_{c}^{-1}\circ\Big{(}U(c^{2k+2}x,c^{2k+1}y)\bl V(c^{2k+2}x,c^{2k+1}y)\Big{)}\\
&=&\gamma_{c}^{-1}\Big{(}\sqrt[2k+1]{x^{2k+1}c^{(2k+1)(2k+2)}+y^{2k+2}c^{(2k+1)(2k+2)}}\bl c^{2k+1}y\Big{)}\\
&=&\sqrt[2k+1]{x^{2k+1}+y^{2k+2}}\bl y=U\bl V.
\end{eqnarray*}
Note that, as always with superflows, the many symmetries amount to monodromy of radicals, and so we tacitly assumed $\sqrt[2k+1]{c^{(2k+1)(2k+2)}}=c^{2k+2}$, since this is compatible with the boundary condition (\ref{init}). Indeed, in our case this tells that
\begin{eqnarray*}
\lim\limits_{t\rightarrow 0}\frac{U(xt,yt)}{t}=
\lim\limits_{t\rightarrow 0}\frac{\sqrt[2k+1]{(tx)^{2k+1}+(ty)^{2k+2}}}{t}=
\lim\limits_{t\rightarrow 0}\sqrt[2k+1]{x^{2k+1}+ty^{2k+2}}=\sqrt[2k+1]{x^{2k+1}}=x.
\end{eqnarray*} 
Thus, to be precise, for $t$ small enough ($|t|<\frac{|x|^{2k+1}}{|y|^{2k+2}}$), we take the unique branch of $\frac{U(xt,yt)}{t}=\sqrt[2k+1]{y^{2k+2}t+x^{2k+1}}$ which is equal to $x$ at $t=0$, and verify the symmetries for such $t$.

\subsection{An exceptional sub-case}Let now $k=0$. Thus, we have the rational flow of level 1 (see \cite{alkauskas} for details)
\begin{eqnarray}
\phi(\m{x})=y^2+x\bl y.\label{amaz}
\end{eqnarray}
As we have seen, this superflow has a $6-$fold cyclic symmetry, and this is generated by the matrix
\begin{eqnarray*}
\alpha=\begin{pmatrix}
\zeta & 0\\
0 & -\zeta^{-1}
\end{pmatrix},\quad \zeta=e^{\frac{2\pi i }{3}}.
\end{eqnarray*}
In general, let
\begin{eqnarray*}
L=\begin{pmatrix}
a & b\\
c & d
\end{pmatrix},\quad ad-bc\neq 1.
\end{eqnarray*}
Then
\begin{eqnarray}
L^{-1}\circ\phi\circ L(x,y)=\frac{d}{ad-bc}(cx+dy)^2+x\bl -\frac{c}{ad-bc}(cx+dy)^2+y.
\label{ll}
\end{eqnarray}
If the flow $\phi$ is invariant under conjugation with $L$, then 
\begin{eqnarray*}
c=0,\quad\frac{d^2}{a}=1.
\end{eqnarray*}
So, all symmetries of this superflow are given by
\begin{eqnarray*}
\widetilde{\Gamma}=\Bigg{\{}\delta_{b,d}=\begin{pmatrix} d^2 & b\\ 0& d\end{pmatrix}:b\in\mathbb{C},d\in\mathbb{C}^{*}\Bigg{\}}.
\end{eqnarray*}
To make the flow look more symmetric, note that $\phi$ is linearly conjugate to the flow $\phi_{\mathrm{sph},\infty}=(x-y)^2+x\bl (x-y)^2+y$. Indeed, in (\ref{ll}) we take $L=\begin{pmatrix}
1 & 0\\
-1 & 1
\end{pmatrix}$. The notation $\phi_{\mathrm{sph},\infty}$ is borrowed from \cite{alkauskas}. Performing this linear conjugation $L^{-1}\circ\phi\circ L$, $L^{-1}\circ\widetilde{\Gamma}\circ L$, we get all but the last statements of the following. 
\begin{prop}Let $\zeta=e^{\frac{2\pi i}{3}}$. The flow
\begin{eqnarray*}
\phi_{\mathrm{sph},\infty}=(x-y)^2+x\bl (x-y)^2+y
\end{eqnarray*}
 has a $6$-fold cyclic symmetry generated by the order $6$ matrix
\begin{eqnarray*}
\gamma=\begin{pmatrix}
\zeta & 0\\
\zeta+\zeta^{-1}&-\zeta^{-1}
\end{pmatrix}.
\end{eqnarray*}
Its vector field is $(x-y)^2\bl (x-y)^2$. This is the unique vector field without denominators with this $6-$fold symmetry. So, $\phi_{\mathrm{sph},\infty}$ is the superflow.\\
 
However, the full group of symmetries of this superflow is the group
\begin{eqnarray*}
\Gamma=\Bigg{\{}\gamma_{d,b}=\begin{pmatrix}
d^2-b & b\\
d^2-d-b & d+b
\end{pmatrix}:b\in\mathbb{C},d\in\mathbb{C}^{*}\Bigg{\}}.
\end{eqnarray*}
Matrix $\gamma_{d,b}$ is of finite order only if $d$ is a root of unity, $b$ is arbitrary, except the case $d=1$, $b\neq 0$, when it is of infinite order. All finite subgroups of $\Gamma$ are cyclic. So, formally, $\phi_{\mathrm{sph}, \infty}$ is still a reducible superflow.  
\end{prop}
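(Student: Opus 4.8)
The plan is to reduce every assertion to the flow $\phi=y^2+x\bl y$ analysed in the previous subsection, using the conjugation by $L=\begin{pmatrix}1&0\\-1&1\end{pmatrix}$, and then to extract the remaining (``last'') assertions from the transparent structure of the group $\widetilde{\Gamma}$. First I would record, by a one-line matrix computation, that $L^{-1}=\begin{pmatrix}1&0\\1&1\end{pmatrix}$ and that $L^{-1}\alpha L=\gamma$ and $L^{-1}\delta_{b,d}L=\gamma_{d,b}$; conjugating the vector field $y^2\bl 0$ of $\phi$ by $L$ likewise produces $(x-y)^2\bl(x-y)^2$. Since the superflow property, the uniqueness of the invariant denominator-free vector field, and the identification of the full symmetry group are all invariant under linear conjugation, every statement except the order and structure assertions follows immediately from the previous subsection, and in particular $\Gamma=L^{-1}\widetilde{\Gamma}L$ is the full symmetry group.

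For the order characterisation I would exploit that $\gamma_{d,b}$ is conjugate to the upper-triangular matrix $\delta_{b,d}=\begin{pmatrix}d^2&b\\0&d\end{pmatrix}$, so the two have the same order. Its eigenvalues are $d^2$ and $d$, and a complex matrix has finite order exactly when all its eigenvalues are roots of unity \emph{and} it is diagonalizable. The eigenvalues are roots of unity precisely when $d$ is, and they coincide only for $d=1$ (as $d\neq 0$); there $\delta_{b,1}=\begin{pmatrix}1&b\\0&1\end{pmatrix}$ is a non-trivial Jordan block of infinite order unless $b=0$. This yields exactly the stated dichotomy: $\gamma_{d,b}$ has finite order if and only if $d$ is a root of unity, with the single exception $d=1,\ b\neq 0$.

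The assertion that every finite subgroup of $\Gamma$ is cyclic I would obtain from the homomorphism $\pi\colon\widetilde{\Gamma}\to\mathbb{C}^{*}$, $\delta_{b,d}\mapsto d$ (and its conjugate on $\Gamma$), whose kernel $\{\delta_{b,1}:b\in\mathbb{C}\}$ is isomorphic to the additive group $(\mathbb{C},+)$. Since $(\mathbb{C},+)$ is torsion-free, any finite subgroup $G$ meets the kernel trivially, so $\pi|_{G}$ is injective and embeds $G$ into a finite subgroup of $\mathbb{C}^{*}$; the latter is cyclic, hence so is $G$. Finally, the generating representation $\langle\gamma\rangle$ is conjugate to the diagonal $\langle\alpha\rangle$ and is therefore reducible, which is precisely the meaning of ``formally still a reducible superflow''. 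The whole proposition is thus a corollary of the previous subsection together with elementary group theory, and I expect the only genuinely delicate point to be the repeated-eigenvalue case $d=1$ in the order analysis: one must remember that equal root-of-unity eigenvalues still force infinite order whenever the matrix fails to be diagonalizable, and this non-diagonalizability is exactly the origin of the $d=1,\ b\neq 0$ exception.
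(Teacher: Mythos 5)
Your proof is correct. The reduction via $L=\begin{pmatrix}1&0\\-1&1\end{pmatrix}$ to the flow $y^2+x\bl y$ and the group $\widetilde{\Gamma}$, and the finite-order characterisation via the eigenvalues $d^2$, $d$ of $\delta_{b,d}$, follow the paper exactly (the paper states the Jordan-form argument more tersely, leaving the repeated-eigenvalue case $d=1$ implicit in the statement; your explicit observation that equal root-of-unity eigenvalues plus non-diagonalizability force infinite order is the right completion, and the paper itself invokes the fact ``$\delta_{1,B}$ of finite order $\Rightarrow B=0$'' only later, inside its subgroup argument). Where you genuinely diverge is the claim that all finite subgroups of $\Gamma$ are cyclic. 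The paper proves this by a hands-on minimality argument: for a finite subgroup generated by two elements with root-of-unity parameters $d,e$, it picks among the roots of unity $d^{a}e^{b}$ an element $d_0$ with non-negative real part and minimal positive imaginary part, shows $d$ and $e$ are integral powers of $d_0$, and concludes the subgroup is cyclic -- with the cases $d=e=-1$ and cubic roots of unity set aside for separate treatment, and with a tacit extra step needed to pass from $2$-generated subgroups to arbitrary finite ones. Your argument via the homomorphism $\pi\colon\delta_{b,d}\mapsto d$ onto $\mathbb{C}^{*}$, whose kernel $\{\delta_{B,1}:B\in\mathbb{C}\}\cong(\mathbb{C},+)$ is torsion-free, so that any finite subgroup embeds into a finite (hence cyclic) subgroup of $\mathbb{C}^{*}$, is cleaner: it handles all finite subgroups at once, needs no exceptional cases, and makes transparent that both proofs rest on the same torsion-freeness of the unipotent kernel. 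What it buys this economy with is the standard fact that finite subgroups of $\mathbb{C}^{*}$ are cyclic, which the paper's minimality argument in effect reproves by hand in the $2$-generated case; the one computation you should make explicit is that $\pi$ is indeed a homomorphism, i.e.\ $\delta_{b,d}\,\delta_{c,e}=\delta_{d^{2}c+be,\,de}$, which also re-derives the kernel structure.
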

\begin{proof} It is more convenient to work with the superflow $y^2+x\bl y$ and the group $\widetilde{\Gamma}$. Eigenvalues of $\delta_{b,d}$ are $d^2$ and $d$, and if they are different and $d$ is a root of unity, a matrix $\delta_{b,d}$ has a diagonal Jordan form, it is of a finite order, and the claim about $\gamma_{d,b}$ follows. \\

Consider the group $\Delta$ generated by the two matrices $\delta_{d,b}$ and $\delta_{e,c}$, where $d,e\neq 1$ are the roots of unity, primitive of orders $p$ and $q$, respectively. Without loss of generality, we may assume that imaginary parts of $d$ and $e$ are positive; otherwise consider $\delta^{-1}_{d,b}$ instead of $\delta_{d,b}$, the same for $\delta_{e,c}$. Suppose, $\Delta$ is a finite group. The root of unity $d^{a}e^{b}$, $a,b\in\mathbb{Z}$, is of order at most $pq$, so its imaginary part is $0$, or is bounded away from $0$. Let $a_{0}$ and $b_{0}$ are chosen such that $d_{0}=d^{a_{0}}e^{b_{0}}$ has a non-negative real part and a positive imaginary part, smallest among all $d^{a}e^{b}$, $a,b\in\mathbb{Z}$.
This is always possible to achieve, unless $d=e=-1$, or $d$ and $e$ are cubic roots of unity; these two cases can be easily treated separately.
Let
\begin{eqnarray*}
\delta_{d_{0},b_{0}}=\delta_{d,b}^{a_{0}}\cdot\delta_{e,c}^{b_{0}}.\quad
\text{Then }\delta_{d_{0},b_{0}}\in\Delta.
\end{eqnarray*}
 We now see that $d$ and $e$ are integral powers of $d_{0}$. Indeed, if for a certain $s\in\mathbb{Z}$, $d_{0}^{s}$, $d$ and $d_{0}^{s+1}$ are three different consecutive points on the unit circle, then the real part of $dd_{0}^{-s}=d^{1-a_{0}s}e^{-b_{0}s}$ is non-negative, and an imaginary part is positive and smaller than that of $d_{0}$, contradicting the minimality condition.  
Thus, $d=d_{0}^{s}$ for a certain $s\in\mathbb{Z}$, and
\begin{eqnarray*}
\delta_{d,b}\delta_{d_{0},b_{0}}^{-s}=\delta_{1,B}\in\Delta,
\end{eqnarray*}
where $B\in\mathbb{C}$. Since this is a matrix of a finite order, this necessarily implies $B=0$. Therefore, $\delta_{d,b}=\delta_{d_{0},b_{0}}^{s}$. Similarly  for $d_{e,c}$. Thus, $\Delta=\{\delta_{d_{0},b_{0}}^{s}:s\in\mathbb{Z}\}$ is a finite cyclic group.
Thus, the group $\Gamma$ does not have non-abelian finite subgroups.
\end{proof}

\subsection{The second special case}
\label{spec-second}
Equally, we can investigate the cyclic group generated by $\alpha$ in case $m=4k+1$. This leads to
\begin{prop}The only vector field with a denominator of degree $\leq 2k-1$ which is  invariant under conjugation with $\alpha$ is given by, up to multiplication with a constant, by
\begin{eqnarray*}
\varpi\bl\varrho=0\bl \frac{x^{2k+1}}{y^{2k-1}}.
\end{eqnarray*}
So this vector field produces the superflow. 
\end{prop}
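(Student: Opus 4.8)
The plan is to mirror the argument used for the first special case ($m=4k+3$), applying the Reynolds averaging operator over the cyclic group $\Gamma=\{\alpha^{s}:s=0,\ldots,8k+1\}$, which now has order $|\Gamma|=8k+2$, and then tracking which monomials survive the averaging. Throughout I assume $k\geq 1$, since otherwise the bound ``denominator of degree $\leq 2k-1$'' is vacuous. First I would record the relevant invariant theory: $\alpha$ acts by $x\mapsto\zeta x$, $y\mapsto-\zeta^{-1}y$, where $\zeta$ is a primitive $(4k+1)$th root of unity, so no power of $\zeta$ equals $-1$. Every monomial $x^{a}y^{b}$ is a relative invariant, carrying the character $\gamma\mapsto\zeta^{a-b}(-1)^{b}$, and two distinct monomials of the same degree share a character only when their $y$-exponents differ by a multiple of $8k+2$. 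Hence any relative invariant of degree $\leq 2k-1$ is a scalar multiple of a single monomial $x^{\ell}y^{d-\ell}$ with $d\leq 2k-1$. Since the denominator of an invariant vector field must be such a relative invariant, I may assume without loss of generality that the denominator is $x^{\ell}y^{d-\ell}$ and that the numerators $P,Q$ are homogeneous of degree $d+2$.

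Next I would form the averaged field $\frac{1}{|\Gamma|}\sum_{\sigma\in\Gamma}\sigma^{-1}\circ(\widehat{\varpi}\bl\widehat{\varrho})\circ\sigma$ and isolate, for each numerator monomial $x^{i}y^{d+2-i}$, its coefficient after summation. A direct bookkeeping of the $\zeta$- and sign-exponents shows (pleasantly independently of $d$, exactly as in the formulas (\ref{sum-pirm})--(\ref{sum-antr})) that the contribution of $u_{i}x^{i}y^{d+2-i}$ to the first coordinate is the geometric sum $\sum_{s}\big((-1)^{i+\ell}\zeta^{2i-2\ell-3}\big)^{s}$, while that of $v_{i}x^{i}y^{d+2-i}$ to the second coordinate is $\sum_{s}\big((-1)^{i+\ell+1}\zeta^{2i-2\ell-1}\big)^{s}$. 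Each sum vanishes unless its base equals $1$, so the whole question reduces to the arithmetic of when these bases are trivial.

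The core step is then this parity-versus-congruence analysis. For the first coordinate, the base equals $1$ forces $i+\ell$ even together with $2(i-\ell)\equiv 3\pmod{4k+1}$, i.e. $i-\ell\equiv 2k+2\pmod{4k+1}$; within $0\leq i\leq d+2$, $0\leq\ell\leq d\leq 2k-1$ the only admissible value is $i-\ell=-(2k-1)$, attained solely at $(i,\ell)=(0,2k-1)$, for which $i+\ell=2k-1$ is \emph{odd} --- so \emph{no} term survives in the first coordinate. For the second coordinate the base equals $1$ precisely when $i+\ell$ is odd and $i-\ell\equiv 2k+1\pmod{4k+1}$; here the unique admissible value $i-\ell=2k+1$ occurs only at $(i,\ell)=(2k+1,0)$, where $i+\ell=2k+1$ is odd --- consistent. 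Thus the single surviving monomial is $x^{2k+1}/y^{2k-1}$ in the second coordinate, which forces $d=2k-1$, and the unique (up to a scalar) invariant vector field of denominator degree $\leq 2k-1$ is $0\bl\frac{x^{2k+1}}{y^{2k-1}}$. Minimality of the denominator degree together with this uniqueness then yields the superflow property, exactly as before.

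I expect the main obstacle to be precisely this parity-versus-congruence bookkeeping. The congruence pins down a single candidate pair $(i,\ell)$ in each coordinate, but it is the \emph{parity} of $i+\ell$ that ultimately decides the outcome, and the two coordinates behave oppositely. This is the mirror image of the first special case, where the first coordinate survived and the second was obstructed; the same delicacy --- the reason $\xi=-\zeta^{-1}$ produces a superflow while $\xi=\zeta$ does not --- resurfaces here with the roles of the two coordinates interchanged, which is exactly why the surviving field is now $0\bl\frac{x^{2k+1}}{y^{2k-1}}$ rather than $\frac{y^{2k+2}}{x^{2k}}\bl 0$.
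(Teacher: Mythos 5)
Your proof is correct and takes essentially the same approach as the paper: the paper's own proof just notes that the sums (\ref{sum-pirm}) and (\ref{sum-antr}) carry over verbatim to the case $m=4k+1$, with the ranges now $0\leq i\leq 2k+1$, $0\leq\ell\leq 2k-1$, which is precisely the averaging-plus-parity/congruence computation you spell out (and your checks are accurate: $(i,\ell)=(0,2k-1)$ fails the parity condition in the first coordinate, while $(i,\ell)=(2k+1,0)$ survives in the second). Your explicit observation that the geometric sums are independent of the denominator degree $d$ merely makes transparent the paper's without-loss-of-generality reduction to a monomial denominator of top degree.
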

\begin{proof}We continue in the same vein, and note that the for the denominator $x^{\ell}y^{2k-1-\ell}$ and the term $x^{i}y^{2k+1-i}$ the sums (\ref{sum-pirm}) and (\ref{sum-antr}) remain exactly the same, only the conditions on $i,\ell$ now read as $0\leq i\leq 2k+1$, $0\leq\ell\leq 2k-1$. The answer follows. 
\end{proof}
\begin{prop}
\label{prop5}
Let $k\in\mathbb{N}$. Let us define
\begin{eqnarray*}
\phi(\m{x})=U(x,y)\bl V(x,y)=x\bl\sqrt[2k]{y^{2k}+x^{2k+1}}.
\end{eqnarray*}  
Then $\phi$ has a vector field $0\bl\frac{1}{2k}\frac{x^{2k+1}}{y^{2k-1}}$, and is a projective superflow for the cyclic group of order $8k+2$ generated by the matrix $\alpha$. The full group of symmetries of $\phi$ is given by
\begin{eqnarray*}
\Gamma_{4k+1}=\Bigg{\{}\gamma_{c}=\begin{pmatrix}
c^{2k} & 0\\
0 & c^{2k+1}
\end{pmatrix},\quad c\in\mathbb{C}^{*}\Bigg{\}}.
\end{eqnarray*}
\end{prop}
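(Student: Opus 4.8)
The plan is to mirror the structure of the proof of Proposition \ref{prop2}, since the roles of the two coordinates have simply been interchanged. First I would verify the vector-field computation by integrating $0\bl\frac{1}{2k}\frac{x^{2k+1}}{y^{2k-1}}$ directly. Since the first component vanishes, we have $U(x,y)=x$, and the orbit equation should yield the $y$-coordinate via the integral $\int_{x/V}^{x/y}\frac{\d t}{\varrho(t,1)}$ with $\varrho(t,1)=\frac{1}{2k}t^{2k+1}$; after a routine computation this gives $V^{2k}-y^{2k}=x^{2k+1}$, matching the claimed $V=\sqrt[2k]{y^{2k}+x^{2k+1}}$. The preceding proposition already guarantees that this vector field is the unique one (up to scalar) invariant under conjugation by $\alpha$ with denominator of degree $\leq 2k-1$, so the superflow property is established; what remains is the determination of the full symmetry group $\Gamma_{4k+1}$.

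Next I would impose the invariance condition $L^{-1}\circ(0\bl\frac{x^{2k+1}}{y^{2k-1}})\circ L=0\bl\frac{x^{2k+1}}{y^{2k-1}}$ for a non-degenerate $L=\begin{pmatrix}a&b\\c&d\end{pmatrix}$ and force $L$ to be diagonal, exactly as in the proof following Proposition \ref{prop2}. Writing out $L(x,y)=(ax+by)\bl(cx+dy)$ and demanding that the first coordinate of the conjugate remain identically zero should immediately kill the off-diagonal entries: the first coordinate of $L^{-1}\circ\mathbf{Q}\circ L$ is a nonzero scalar multiple of $\frac{(ax+by)^{2k+1}}{(cx+dy)^{2k-1}}$, and for this to vanish as a rational function is impossible unless the structure of $L$ collapses the expression. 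More carefully, the vanishing of the \emph{first} output coordinate together with the nonvanishing, prescribed form of the \emph{second} forces the denominator $(cx+dy)^{2k-1}$ to be (up to scalar) a power of $y$ and the surviving numerator to be a power of $x$, which pins $L$ to diagonal form $\mathrm{diag}(a,d)$. Substituting back, the homogeneity bookkeeping gives the single relation $a^{2k}=d^{2k+1}$ (equivalently $a=c^{2k}$, $d=c^{2k+1}$ for a parameter $c\in\mathbb{C}^{*}$), reproducing $\Gamma_{4k+1}$.

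The main obstacle I anticipate is not any single computation but rather justifying cleanly that \emph{no} non-diagonal $L$ survives, including the degenerate-looking boundary possibilities (e.g. $c=0$ but $b\neq0$, or $a=0$). The cyclic symmetry generated by $\alpha$ already restricts candidate symmetries, but to get the \emph{full} group one must argue directly on the rational vector field, and the branch $c=0,\,b\neq 0$ requires checking that a nonzero $b$ actually obstructs invariance (here, unlike the $k=0$ case of (\ref{amaz}), the monomial rather than quadratic nature of the field should make shear maps fail). Once diagonality is forced, the remaining algebra is the same homogeneity matching as in Proposition \ref{prop2}, so I would treat it briefly; by analogy with the remark there that $k=0$ is exceptional, I would also note that for $k\geq 1$ no extra shear symmetries appear, so $\Gamma_{4k+1}$ is exactly the diagonal torus subgroup described.
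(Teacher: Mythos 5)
Correct, and essentially the paper's own route: the paper proves Proposition \ref{prop5} only implicitly, by the mirrored analogue of the orbit-integral computation and the terse ``$L$ must be diagonal'' argument it gives for Proposition \ref{prop2} (plus the branch-of-$\sqrt[2k]{\cdot}$ remark), and your proposal fills in exactly those steps, including forcing $b=0$ from the vanishing first coordinate and then $c=0$ from the prescribed denominator. One slip to fix: invariance of $0\bl\frac{x^{2k+1}}{y^{2k-1}}$ under $L=\mathrm{diag}(a,d)$ forces $a^{2k+1}=d^{2k}$, not $a^{2k}=d^{2k+1}$; since $\mathrm{g.c.d.}(2k,2k+1)=1$, setting $c=d/a$ yields $a=c^{2k}$, $d=c^{2k+1}$, so your parenthetical parametrization (and hence $\Gamma_{4k+1}$) is the correct conclusion.
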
 
As before, we should be careful with ramification, and the symmetry works minding the condition (\ref{init}), and if we assume that for small $t$, $\frac{V(xt,yt)}{t}=\sqrt[2k]{y^{2k}+tx^{2k+1}}$, and the brach is taken which is equal to $y$ for $t=0$.
\subsection{The remaining cases}
Now, if $m=4k$, then $\alpha^{2k}=-I$, and therefore there does not exist a non-trivial vector field with this symmetry.\\

Finally, let $m=4k+2$, $k\in\mathbb{N}$. We can see that this gives a superflow, but not a new one. Indeed, let $\alpha$ be given by (\ref{alfa}), and $\tau=\begin{pmatrix} 0 & 1\\1 & 0\end{pmatrix}$. Then $\tau^{-1}\alpha\tau=\begin{pmatrix}
-\zeta^{-1} & 0\\
0 & \zeta
\end{pmatrix}$. Now, $\xi=-\zeta^{-1}$ is a primitive $(2k+1)$th root of unity, and $-\xi^{-1}=\zeta$. We arrive to the two situations described in Subsections \ref{spec-first} ($m=8k_{0}+6$) and \ref{spec-second} ($m=8k_{0}+2$).
\begin{cor}Let $m\geq 3$, and $\alpha$ be given by (\ref{alfa}). There exists a superflow for the cyclic group generated by $\alpha$ if and only if $m\neq 0 \text{ (mod }4)$.
\end{cor}  
\section{Reducible superflows. II. General case.}
\section{Irreducible superflows. I}
\section{Irreducible superflows. II}    
%\begin{figure}
%\epsfig{file=icosa-neg,width=380pt,height=350pt,angle=0}
%\caption{The intersection of the unit sphere $x^2+y^2+z^2=1$ %(black) with a surface  $(\phi^2x^2-y^2)(\phi^2 y^2-z^2)(\phi^2z^2-%x^2)+(x^2+y^2+z^2)^3=\frac{19}{20}$ (gray), %$\xi=-\frac{1}{20}$.}
%\label{figure-neg}
%\end{figure}
%\begin{figure}
%\epsfig{file=icosa-pos,width=380pt,height=350pt,angle=0}
%\caption{The intersection of the unit sphere $x^2+y^2+z^2=1$ %(black) with a surface  $(\phi^2x^2-y^2)(\phi^2 y^2-z^2)(\phi^2z^2-%x^2)+(x^2+y^2+z^2)^3=\frac{21}{20}$ (gray), $\xi=\frac{1}{20}$.}
%\label{figure-pos}
%\end{figure}

\end{document}